\title{Existence of solutions to degenerate parabolic problems with two weights via the Hardy inequality}
\author[$\diamond$]{ Iwona Skrzypczak \footnote{ email address: iskrzypczak@mimuw.edu.pl }}
\author[$\diamond$]{ Anna Zatorska--Goldstein \footnote{ email address: azator@mimuw.edu.pl \\
The research of A.Z.-G. has been supported by the Foundation for Polish Science grant no. POMOST BIS/2012-6/3 }}
\affil[$\diamond$]{\small
Institute of Applied Mathematics and Mechanics,
%Faculty of Mathematics, Informatics and Mechanics,
University of Warsaw, \newline
ul. Banacha 2, 02-097 Warsaw, Poland
}
\date{}
\begin{document}
\maketitle \sloppy

\thispagestyle{empty}

\renewcommand{\it}{\sl}
\renewcommand{\em}{\sl}

\belowdisplayskip=18pt plus 6pt minus 12pt \abovedisplayskip=18pt
plus 6pt minus 12pt
\parskip 4pt plus 1pt
\parindent 0pt

\newcommand{\barint}{
         \rule[.036in]{.12in}{.009in}\kern-.16in
          \displaystyle\int  }
\def\R{{\mathbb{R}}}
\def\r{{\mathbb{R}}}
\def\n{{\mathbb{N}}}
\def\rn{{\mathbb{R}^{N}}}
\def\rN{{\mathbb{R}^{N}}}
\def\rom{{\mathbb{R}^{m}}}
\def\w{\widetilde}
\def\M{{\cal M}}
\def\zi{[0,\infty )}
\def\sf{{\rm supp\,\phi}  }
\def\sx{{\rm supp\,\xi}  }
\def\rp{{\mathbb{R}_{+}}}
\def\bA{{\bar{A}}}
\def\O{{\Omega}}
\def\o{{\omega}}
\def\Oc{{\Omega\cap}}
\def\a{{\alpha}}
\def\b{{\beta}}
\def\e{{\epsilon}}
\def\d{{\delta}}
\def\D{{\Delta}}
\def\dL{{\Delta_p^{\omega_2}}}
\def\t{{\tau}}
\def\s{{\sigma}}
\def\g{{\gamma}}
\def\k{{\kappa}}
\def\tu{{{u}}}
\def\tW{{{W}}}
\def\na{{\nabla}}
\def\rr{{\rangle }}
\def\ll{{\langle }}
\def\irn{{\int_\rn}}
\def\dsp{\def\baselinestretch{1.37}\large
\normalsize}

\newtheorem{theo}{\bf Theorem}[section]
\newtheorem{coro}{\bf Corollary}[section]
\newtheorem{lem}{\bf Lemma}[section]
\newtheorem{rem}{\bf Remark}[section]
\newtheorem{defi}{\bf Definition}[section]
\newtheorem{ex}{\bf Example}[section]
\newtheorem{fact}{\bf Fact}[section]
\newtheorem{prop}{\bf Proposition}[section]

\newcommand{\ds}{\displaystyle}
\newcommand{\ts}{\textstyle}
\newcommand{\ol}{\overline}
\newcommand{\wt}{\widetilde}
\newcommand{\ck}{{\cal K}}
\newcommand{\ve}{\varepsilon}
\newcommand{\vp}{\varphi}
\newcommand{\pa}{\partial}
\newcommand{\psa}{\Phi_{s,A}}

%\dsp

\parindent 1em

\begin{abstract}
The paper concentrates on the application of the following Hardy inequality
\begin{equation*}%\label{H-P}
\int_\O \ |\xi(x)|^p \omega_{1 }(x)dx\le \int_\O |\nabla \xi(x)|^p\omega_{2 }(x)dx,
\end{equation*}
to the proof of  existence of weak solutions to degenerate parabolic problems of the type
\begin{equation*} \left\{\begin{array}{ll}
 u_t-div(\omega_2(x)|\nabla u|^{p-2} \nabla u )=  \lambda  W(x) |u|^{p-2}u&  x\in\Omega,\\
 u(x,0)=f(x)& x\in\Omega,\\
 u(x,t)=0& x\in\partial\Omega,\ t>0,\\
\end{array}\right.
\end{equation*}
on an open subset $\Omega\subseteq\mathbb{R}^n$, not necessarily bounded, where \[W(x)\leq \min\{m,\omega_1(x)\},\qquad m\in\R_+.\]
\end{abstract}

\smallskip

  {\small {\bf Key words and phrases:}  existence of solutions, Hardy inequalities, parabolic problems, weighted $p$--Laplacian, weighted Sobolev spaces}

{\small{\bf Mathematics Subject Classification (2010)}:  35K55, 35A01, 47J35. }

\section{Introduction}

We investigate existence and regularity of solutions to a  broad class of nonlinear parabolic equations
\begin{equation} \label{paraprob0}
u_t-div(\omega_2(x)|\nabla u|^{p-2}\nabla u)=\lambda \omega_1(x) |u|^{p-2}u,\qquad \text{in}\quad\Omega
\end{equation}
on an open subset $\Omega\subseteq\mathbb{R}^n$, not necessarily bounded, with certain weights $\omega_1,\omega_2\ge 0$. We explore the meaning of the optimal constant in~the Hardy inequality in~parabolic problems, see~\cite{gaap} for details. The optimal constant in~classical versions of the Hardy inequality indicates the  critical $\lambda$ for~blow--up or global existence, as well as the sharp decay rate of~the solution, see e.g.~\cite{AdChaRa,anh,bargold,BV,bh,32,gaap,xiang,vz}, Section 2.5 in \cite{ma}). In the elliptic case, existence results can also be obtained via the Hardy inequality, e.g.~\cite{drabekgarciahuidobro,pucci,puccimanahuidmana}.

The inequalities which are crucial in our approach used to be called improved Hardy inequalities, improved Hardy--Sobolev inequalities, or Hardy--Poincar\'{e} inequalities. One of the first well--known `improvents'  was introduced in~\cite{BV} as
 \[C_1\int_\Omega \frac{u^2}{|x|^2} dx+C_2\left(\int_\Omega u^q dx\right)^\frac{2}{q}\leq\int_\Omega |\nabla u|^2 dx\]
and was used therein in investigations on qualitative properties to~the equation $-\Delta u = \lambda f(u)$, with convex and increasing function $f$.

In~\cite{vz} Vazquez and Zuazua describe the asymptotic behaviour of the heat equation that reads \[u_t=\D u +V(x)u \quad\mathrm{and}\quad \D u +V(x)u+\mu u=0,\]
where $V(x)$ is an inverse--square potential (e.g. $V(x)=\frac{\lambda}{|x|^2}$).
The authors consider solutions to the Cauchy--Dirichlet problem in a bounded domain and to the Cauchy problem in $\rn$ as well. The key tool is an improved form of~the Hardy--Poincar\'{e} inequality.
Furthermore, in~\cite{vz} the authors  generalize the seminal paper by Baras and Goldstein~\cite{bargold}. Nevertheless, the involved Hardy inequalities do  not admit a broad class of weights.

 In several papers, e.g.~\cite{bbdg,blanchet_09,sharp}, dealing with the rate of convergence of~solutions to fast diffusion equations \[u_t=\Delta u^m,\] the authors study the estimates for the constants in Hardy-Poincar\'{e}-type inequalities.   The optimal constant in Hardy-type inequalities used to indicate the  critical $\lambda$ for blow-up or global existence, as well as the sharp decay rate of the solution. However, they usually deal with weights of the form
 \[|x|^{-\alpha}\quad\mathrm{or}\quad\left(1+|x|^2\right)^{-\alpha}.\]

The inspiration of our research was the paper of  Garc\'{i}a Azorero and  Peral Alonso~\cite{gaap}, who apply the Hardy inequality  {\cite[Lemma 2.1]{gaap}} of the form
%\footnote{Compare with Corollary~\ref{coroharrn}: $\int_{\rn\setminus\{0\}} \ |\xi|^p |x|^{\g-p}  \ dx \le \widetilde{C}_{min}\int_{\rn\setminus\{0\}} |\nabla \xi|^p{|x|^{\g}}\, dx.$}
\[\lambda_{N,p}\int_{\rN} \ |\xi|^p |x|^{-p}  \ dx \le  \int_{\rN} |\nabla \xi|^p\, dx ,\]
where $\lambda_{N,p}$ is optimal, to analyse positivity of the following nonlinear operator
\begin{equation*}\label{operatorgaap} {\cal L}_\lambda u = -\Delta_p u-\frac{\lambda}{|x|^p}|u|^{p-2}u\end{equation*}
in $W^{1,p}_0(\Omega)$ and to obtain the existence of weak solutions to the corresponding parabolic problem.

Our proof follows classical methods of Lions~\cite{lions}, as well as the approach of~Anh,~Ke~\cite{anh}, who consider the initial boundary value problem for a class of~quasilinear parabolic equations involving weighted $p$-Laplace operator. Our major difficulties are of technical nature and require more advanced setting, i.e. two-weighted Sobolev spaces $W^{1,p}_{(\omega_1,\omega_2)}(\Omega)$, due to presence of~general class of weights both in the leading part of~the operator as well as on the~right-hand side of~\eqref{paraprob0}.

To involve broader class of weights in~\eqref{paraprob0}, we need more general Hardy inequalities. We apply the ones derived in~\cite{plap}, having the form \begin{equation*}%\label{H-P}
\int_\O \ |\xi(x)|^p \omega_{1 }(x)dx\le \int_\O |\nabla \xi(x)|^p\omega_{2 }(x)dx,
\end{equation*}
where the involved weights $\omega_{1 },\omega_2$ depend on a weak solution to PDI:
\begin{equation*}%\label{plapproblem}
-\Delta_pv\ge \Phi \quad \mathrm{in}\quad \O,
\end{equation*}
with a locally integrable function $\Phi$ (see Theorem \ref{theoplap}). Quite a general function
$\Phi$ is allowed. It can be negative or sign changing if only it is, in~a~certain sense, bounded from below. The weights $\omega_1,\omega_2$ in~\eqref{paraprob0} are assumed to~be a pair in the Hardy inequality. This inequality is needed to obtain a~priori estimates for solutions to~\eqref{paraprob0}.

Moreover, to ensure that the weighted Sobolev spaces $W^{1,p}_{(\omega_1,\omega_2)}(\Omega)$ and $L^p(0,T;W^{1,p}_{(\omega_1,\omega_2)}(\O))$, with two different weights $\omega_1,\omega_2$, have proper structure, we impose additional regularity restrictions on the weights, see Subsection~\ref{ss:weights}. We suppose $2\leq p< N$ to deal with embeddings.

In the paper we add yet another restriction, namely we consider the problem
 \begin{equation*} \label{paraprob} \left\{\begin{array}{ll}
 u_t-div(\omega_2(x)|\nabla u|^{p-2} \nabla u )=  \lambda  W(x) |u|^{p-2}u&  x\in\Omega,\\
 u(x,0)=f(x)& x\in\Omega,\\
 u(x,t)=0& x\in\partial\Omega,\ t>0,\\
\end{array}\right.
\end{equation*}
on an open subset $\Omega\subseteq\mathbb{R}^n$, not necessarily bounded, where the function $W(x)\leq \min\{m,\omega_1(x)\}$, $m\in\R_{+}$. The case of unbounded potential on the right-hand side is more difficult and requires more complex arguments. This work is in progress. We decided to add the restriction here in order to~make the presentation more transparent.

\section{Preliminaries}\label{prelim}

\subsection{Notation}

In the sequel we assume that  $2\leq p<N$, $\frac{1}{p}+\frac{1}{p'}=1$, $\O\subseteq\rn$ is an open subset, not necessarily bounded. For $T>0$ we denote $\Omega_T=\Omega\times(0,T)$.

We denote the $p$-Laplace operator by
\[
 \Delta_p  u =  \mathrm{div}( |\nabla u|^{p-2}\nabla u)\] and the $\omega$-$p$-Laplacian by \begin{eqnarray}
 \Delta_p^{\omega } u&=& \mathrm{div}(\omega|\nabla u|^{p-2}\nabla u),\label{Lpom2}
\end{eqnarray}
 with a certain weight function $\omega:\Omega\to\R$.

By $\langle f,g\rangle$ we denote the standard scalar product in $L^2(\Omega)$.

\subsection{Sobolev spaces}

Suppose $\omega$ is a positive, Borel  measurable, real  function  defined on an open set $\Omega\subseteq \rn$,  satisfying the so-called $B_p$ condition, i.e.
 \begin{equation}
 \label{Bp}
 \omega^{-1/(p-1)}\in L^1_{{  loc}}(\Omega),
 \end{equation}
 see~\cite{kuf-opic}. This condition is weaker than the $A_p$-condition, see~\cite{muckenhoupt}.

 Whenever $\omega_1,\omega_2$ satisfy~\eqref{Bp}, we denote
\begin{equation}\label{polnorma}
W^{1,p}_{(\omega_1,\omega_2)}(\Omega):= \left\{ f\in L^{p}_{\omega_1}(\Omega) :  %\frac{\partial f}{\partial x_1} , \dots ,\frac{\partial f}{\partial x_n}
\nabla f\in (L_{\omega_2}^p (\Omega ))^N\right\},
\end{equation}
where $\nabla$ stands for the distributional gradient. The space is equipped with the norm
\begin{multline*}
\| f\|_{W^{1,p}_{(\omega_1,\omega_2)}(\O)}\ :\,=\ \| f\|_{L^{p}_{\omega_1}(\O)} + \| \nabla f\|_{(L_{\omega_2}^p (\Omega ))^N}\\
=\left(\int_{\Omega}|f|^{p}{\omega_1(x)}dxdt\right)^\frac{1}{p} + \left(\int_{\Omega}\sum_{i=1}^N\left|\frac{\partial f}{\partial x_i} \right|^{p}{\omega_2(x)}dxdt\right)^\frac{1}{p}.
\end{multline*}

 \begin{fact}[e.g.~\cite{kuf-opic}]\label{factemb} \rm If  $p>1$, $\Omega\subset \rn$ is an open set, $\omega_1,\omega_2$ satisfy~\eqref{Bp}, then \begin{itemize}
 \item $W^{1,p}_{(\omega_1,\omega_2)}(\Omega)$ defined by \eqref{polnorma} equipped with the norm $\| \cdot\|_{W_{(\omega_1,\omega_2)}^{1,p}(\Omega)}$ is a~Banach space;
 \item $\displaystyle L^{p}_{\omega_1,loc}(\Omega)\subseteq  L^1_{{ loc}}(\Omega);$
  \item  $\displaystyle \overline{Lip_0(\Omega)}  =\overline{C^\infty_0(\Omega)}  =W^{1,p}_{(\omega_1,\omega_2),0}(\O),$ where the closure is in the norm $\| \cdot\|_{W_{(\omega_1,\omega_2)}^{1,p}(\Omega)}$;
\item if $\omega_1,\omega_2$ are a pair in the Hardy-Poincar\'{e} inequality of the form~\eqref{inq:hardyplap}, we may consider the Sobolev space $W^{1,p}_{(\omega_1,\omega_2),0}(\O)$ equipped with the norm
\[\| f\|_{W^{1,p}_{(\omega_1,\omega_2), 0}(\O)} = \| \nabla f\|_{L^{p}_{\omega_2}(\O)}.\]
  \end{itemize}
\end{fact}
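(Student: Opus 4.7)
The plan is to dispose of the four claims in order, using the $B_p$ condition as the single structural input that reduces everything to essentially classical arguments. I would start from (ii), since it underlies the other parts. For any compact $K\subset\Omega$ and any $f\in L^p_{\omega_1,loc}(\Omega)$, factoring $|f|=|f|\omega_1^{1/p}\cdot\omega_1^{-1/p}$ and applying H\"older's inequality with exponents $p$ and $p'$ yields
\[
\int_K |f|\, dx \le \left(\int_K |f|^p \omega_1\, dx\right)^{1/p}\left(\int_K \omega_1^{-1/(p-1)}\, dx\right)^{1/p'},
\]
and the second factor is finite by the $B_p$ condition~\eqref{Bp}. An analogous estimate for $\omega_2$ shows that the distributional gradient of any $f\in W^{1,p}_{(\omega_1,\omega_2)}(\Omega)$ lies in $(L^1_{loc}(\Omega))^N$, which legitimises the definition of the space in the first place.

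For (i), I would take a Cauchy sequence $(f_k)$ in the weighted Sobolev norm. Since $\omega_1\, dx$ and $\omega_2\, dx$ are Radon measures on $\O$, the weighted $L^p$ spaces are themselves Banach, so $f_k\to f$ in $L^p_{\omega_1}(\Omega)$ and $\na f_k\to g$ in $(L^p_{\omega_2}(\Omega))^N$. By (ii) both convergences pass to $L^1_{loc}$, hence to $\mathcal{D}'(\Omega)$, so letting $k\to\infty$ in the identity $\int f_k\,\pa_i\vp\, dx =-\int(\pa_i f_k)\vp\, dx$ for $\vp\in C^\infty_0(\Omega)$ identifies $g=\na f$, which completes the proof that $f_k\to f$ in $W^{1,p}_{(\omega_1,\omega_2)}(\Omega)$. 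For (iii), I would use the classical truncation-plus-mollification scheme of Kufner--Opic: given $f\in W^{1,p}_{(\omega_1,\omega_2),0}(\Omega)$, multiply by a smooth cutoff $\eta_R$ to handle the unboundedness of $\Omega$ and the boundary of its support, then convolve with a standard mollifier $\rho_\e$; the $B_p$ condition is precisely what ensures that the convolution approximates both $f$ and $\na f$ in the weighted norms. The trivial inclusions $\overline{C^\infty_0(\Omega)}\subseteq\overline{Lip_0(\Omega)}\subseteq W^{1,p}_{(\omega_1,\omega_2),0}(\Omega)$ then close the chain of equalities.

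Finally, (iv) is immediate from the Hardy inequality~\eqref{inq:hardyplap}: applied on $C^\infty_0(\Omega)$ it yields $\|\vp\|_{L^p_{\omega_1}}\le\|\na\vp\|_{L^p_{\omega_2}}$, and by (iii) this extends to all of $W^{1,p}_{(\omega_1,\omega_2),0}(\Omega)$, so $\|\na f\|_{L^p_{\omega_2}}$ controls the full Sobolev norm (the reverse inequality being trivial). The main obstacle I anticipate is the density statement (iii): cutting off against $\pa\Omega$ and, when $\Omega$ is unbounded, against infinity without disturbing convergence in the weighted norms requires that the cutoffs remain bounded in the weighted gradient seminorm, which is where $B_p$ has to be used in a genuinely quantitative way. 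Everything else reduces, as above, to a careful bookkeeping of the H\"older inequality applied with the weight $\omega_1$ (respectively $\omega_2$) against its dual.
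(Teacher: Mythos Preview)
The paper does not prove this statement at all: it is recorded as a ``Fact'' with a reference to Kufner--Opic~\cite{kuf-opic}, and no argument is supplied. Your sketch is correct and is precisely the standard route taken in that reference, so there is nothing to compare against beyond noting that you have filled in what the paper deliberately omitted. One small point of precision in (iv): the Hardy inequality~\eqref{inq:hardyplap} carries the constant $K$, so what you obtain is $\|f\|_{L^p_{\omega_1}}\le K^{-1/p}\|\nabla f\|_{L^p_{\omega_2}}$, which is of course still enough for norm equivalence.
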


\begin{fact}\label{facthemi} \rm
The operator $\dL$, given by~\eqref{Lpom2}, is hemicontinuous, i.e. for all $u,v,w\in W^{1,p}_{(\omega_1,\omega_2),0}(\O)$ the mapping $\lambda\mapsto\ll \dL(u+\lambda v),w\rr$ is continuous from $\r$ to $\r$.
\end{fact}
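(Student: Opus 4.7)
The plan is to rewrite the duality pairing as an integral and then apply the dominated convergence theorem. Specifically, interpret $\langle \Delta_p^{\omega_2}(u+\lambda v),w\rangle$ via integration by parts as
\[
\langle \Delta_p^{\omega_2}(u+\lambda v),w\rangle = -\int_\Omega \omega_2(x)\,|\nabla(u+\lambda v)|^{p-2}\,\nabla(u+\lambda v)\cdot\nabla w\,dx,
\]
so that the claim reduces to continuity of the real-valued function
\[
F(\lambda) := \int_\Omega \omega_2(x)\,|\nabla u+\lambda\nabla v|^{p-2}(\nabla u+\lambda\nabla v)\cdot\nabla w\,dx.
\]

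First I would fix an arbitrary sequence $\lambda_n\to\lambda_0$ in $\mathbb{R}$ and show that the integrands converge pointwise a.e. in $\Omega$. Since the map $\R^N\ni \xi\mapsto |\xi|^{p-2}\xi$ is continuous for $p\geq 2$, and $\nabla u+\lambda_n\nabla v\to \nabla u+\lambda_0\nabla v$ pointwise a.e., the pointwise convergence of the integrands is immediate. The next step is to produce an $L^1(\Omega)$ dominant, which is the only nontrivial point. Choose $M$ with $|\lambda_n|\leq M$ for all $n$; then
\[
\omega_2\bigl|\nabla u+\lambda_n\nabla v\bigr|^{p-1}|\nabla w| \;\leq\; \omega_2\bigl(|\nabla u|+M|\nabla v|\bigr)^{p-1}|\nabla w|.
\]

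To see that the right-hand side lies in $L^1(\Omega)$, split the weight as $\omega_2=\omega_2^{(p-1)/p}\cdot \omega_2^{1/p}$ and apply H\"older's inequality with conjugate exponents $p'=p/(p-1)$ and $p$:
\[
\int_\Omega \omega_2\bigl(|\nabla u|+M|\nabla v|\bigr)^{p-1}|\nabla w|\,dx \leq \left(\int_\Omega \omega_2\bigl(|\nabla u|+M|\nabla v|\bigr)^p dx\right)^{\!(p-1)/p}\!\!\left(\int_\Omega \omega_2|\nabla w|^p dx\right)^{\!1/p}.
\]
Both factors on the right are finite because $u,v,w\in W^{1,p}_{(\omega_1,\omega_2),0}(\Omega)$, using the elementary inequality $(a+b)^p\leq 2^{p-1}(a^p+b^p)$ in the first factor. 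With the pointwise convergence and this $L^1$ dominant in hand, Lebesgue's dominated convergence theorem yields $F(\lambda_n)\to F(\lambda_0)$, which is exactly hemicontinuity.

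The main (minor) obstacle is securing the dominating function via the weighted H\"older estimate; this is where the assumption that $u,v,w$ all belong to $W^{1,p}_{(\omega_1,\omega_2),0}(\Omega)$ (so that their gradients lie in $L^p_{\omega_2}$) is essential. Everything else is a direct application of dominated convergence once the pairing has been written as a single integral.
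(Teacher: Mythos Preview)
Your argument is correct: writing the pairing as a weighted integral, establishing pointwise convergence of the integrands via continuity of $\xi\mapsto|\xi|^{p-2}\xi$, and dominating by a fixed $L^1(\Omega)$ function through the weighted H\"older inequality is exactly the standard route, and all steps go through as you describe.

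Note, however, that the paper does not actually prove this statement---it is recorded as a \emph{Fact} without argument, in the spirit of a well-known property of $p$-Laplace type operators. So there is no ``paper's own proof'' to compare against; you have supplied the (routine) details that the authors left implicit. One small remark: you invoke $p\geq 2$ for the continuity of $\xi\mapsto|\xi|^{p-2}\xi$, and while the paper does work under the standing assumption $2\leq p<N$, the map is in fact continuous on all of $\R^N$ for every $p>1$, so this restriction is not essential to the hemicontinuity itself.
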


\bigskip

We look for solutions in the space $L^p(0,T;W^{1,p}_{(\omega_1,\omega_2)}(\O))$, i.e.
\begin{equation*}
\label{LpWp}L^p(0,T;W^{1,p}_{(\omega_1,\omega_2)}(\O))=\left\{f\in L^p(0,T;L^{p}_{\omega_1}(\O)):\nabla f\in (L^p(0,T;L^{p}_{\omega_2}(\O)))^N\right\}
\end{equation*}
(as before $\nabla$ stands for the distributional gradient with respect to the spacial variables), equipped with the norm
\begin{equation*}
\label{normLpWp}
\| f\|_{L^p(0,T;W^{1,p}_{(\omega_1,\omega_2)}(\O))}:= \left(\int_{0}^T\| f\|_{L^{p}_{\omega_1}(\O)}dt\right)^\frac{1}{p} + \left(\int_{0}^{T}\| \nabla f\|_{(L_{\omega_2}^p (\Omega ))^N}dt\right)^\frac{1}{p}.
\end{equation*}

\subsubsection*{Dual space}

By $W^{-1,p'}_{(\omega_1,\omega_2)}(\O)$ we denote the dual space to $W^{1,p}_{(\omega_1,\omega_2),0}(\O)$ and the duality pairing is given by the standard scalar product.

We note that $ L^{p'}(0,T;W^{-1,p'}_{(\omega_1,\omega_2)}(\O))$  is the dual space to $L^p(0,T;W^{1,p}_{(\omega_1,\omega_2),0}(\O))$.

\subsubsection*{Embeddings} In the framework of weighted Sobolev spaces we impose restrictions on the weights to ensure that we have the Sobolev--type embeddings. Following~\cite{anh}, we introduce the following conditions:
\begin{itemize}
\item[$({\cal H}_\alpha)$]  There exists  some $\alpha\in (0,p)$
\begin{equation}
\label{Halpha} \liminf_{x\to z}\frac{\omega }{|x-z|^{\alpha}}>0\qquad \forall_{z\in\overline{\Omega}}.
\end{equation}
\item[$({\cal H}^\infty_{\alpha,\beta})$] There exists some $\beta> p+\frac{N}{2}(p-2)$\begin{equation}
\label{Hinfty} \liminf_{x\to z}\frac{\omega }{|x|^{-\beta}}>0\qquad \forall_{z\in\overline{\Omega}}.
\end{equation}
\end{itemize}

We have the following result, which is a direct consequence of~\cite[Proposition~2.1]{anh}.

\begin{prop} \label{prop:emb} Suppose $2\leq p<\infty$, the function $\omega_2:\Omega\to\R$ is locally intergrable and satisfies condition $({\cal H}_\alpha)$ and, if $\Omega$ is unbounded, we additionally require that $\omega_2$ satisfies also condition $({\cal H}^\infty_{\alpha,\beta})$.
 Assume further that  $(\omega_1,\omega_2)$ is a  pair of weights in the Hardy inequality~\eqref{inq:hardyplap}.

Then for each $r\in \left[1,\frac{pN}{N-p+\alpha}\right)$ we have\begin{equation}
\label{eq:Lremb}
W^{1,p}_{(\omega_1,\omega_2),0}(\O)\subset\subset L^r(\Omega).
\end{equation}
If, additionally, for arbitrary $U\subset\subset \Omega$ there exists a constant $c_U$ such that $\omega_2(x)\geq c_U>0$ in $U$, then we can choose $\alpha=0$.

In particular,  under the above conditions, we have\begin{equation}
\label{eq:L2emb}
W^{1,p}_{(\omega_1,\omega_2),0}(\O)\subset\subset L^2(\Omega)
\end{equation}
and \begin{equation*}
\label{eq:L2L2emb}
L^p(0,T;W^{1,p}_{(\omega_1,\omega_2),0}(\O))\subset  L^2(0,T;L^2(\Omega))= L^2(\Omega_T).
\end{equation*} \end{prop}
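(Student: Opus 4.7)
The plan is to reduce Proposition~\ref{prop:emb} to the one-weight compact embedding of Anh and Ke~\cite{anh}, which is stated only for spaces in which the gradient is weighted and the function itself is not. The first step is to use Fact~\ref{factemb}: under the Hardy inequality~\eqref{inq:hardyplap}, the norm $\| f\|_{W^{1,p}_{(\omega_1,\omega_2),0}(\O)}=\|\na f\|_{L^p_{\omega_2}(\O)}$ is equivalent to the full two-weight norm on the closure of $C_0^\infty(\O)$. This lets me identify elements of $W^{1,p}_{(\omega_1,\omega_2),0}(\O)$ with elements of the one-weight space to which the Anh--Ke proposition applies.

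Second, I would verify that the assumptions translate correctly: the hypothesis $({\cal H}_\alpha)$ on $\omega_2$ is exactly the condition used in \cite[Proposition~2.1]{anh} to obtain the bound $W^{1,p}_{\omega_2,0}(\O)\hookrightarrow\hookrightarrow L^r(\O)$ for $r\in\left[1,\frac{pN}{N-p+\alpha}\right)$ on bounded domains, and $({\cal H}^\infty_{\alpha,\beta})$ is the decay condition needed to handle unbounded $\O$ there. The refinement with $\alpha=0$ follows because, if $\omega_2$ is bounded below by a positive constant on every $U\subset\subset\O$, the proof of \cite[Proposition~2.1]{anh} can be run with the trivial lower bound on $\omega_2$, so one obtains the classical Rellich--Kondrachov exponent.

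For~\eqref{eq:L2emb}, it suffices to check that $r=2$ lies in the admissible range. Since $p\geq 2$ and $\alpha<p$,
\begin{equation*}
\frac{pN}{N-p+\alpha}-2=\frac{(p-2)N+2(p-\alpha)}{N-p+\alpha}>0,
\end{equation*}
so $2$ is strictly below the critical exponent and the compact embedding into $L^2(\O)$ follows. For the parabolic embedding, once $W^{1,p}_{(\omega_1,\omega_2),0}(\O)\hookrightarrow L^2(\O)$ is continuous, I would integrate in time: for $f\in L^p(0,T;W^{1,p}_{(\omega_1,\omega_2),0}(\O))$ this yields $f\in L^p(0,T;L^2(\O))$, and then H\"older's inequality on $(0,T)$ (using $p\geq 2$ and finite $T$) gives $f\in L^2(0,T;L^2(\O))=L^2(\O_T)$.

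The main obstacle is the first step: carefully checking that the Hardy inequality hypothesis allows the gradient norm to act as a full norm on $W^{1,p}_{(\omega_1,\omega_2),0}(\O)$ so that the Anh--Ke result, stated for the one-weight space, really does transfer. The rest is a routine bookkeeping of indices and a standard H\"older argument in time.
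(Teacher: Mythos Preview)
Your proposal is correct and matches the paper's approach: the paper gives no proof beyond the one-line remark that the proposition ``is a direct consequence of~\cite[Proposition~2.1]{anh}'', and your write-up is precisely the natural unpacking of that remark --- use the Hardy inequality (via Fact~\ref{factemb}) to replace the two-weight norm by $\|\nabla f\|_{L^p_{\omega_2}}$, apply Anh--Ke's one-weight compact embedding under $({\cal H}_\alpha)$ and $({\cal H}^\infty_{\alpha,\beta})$, then check $r=2$ lies in the range and finish with H\"older in time.
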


%We have the following embedding result whose proof is given in Appendix.
%\begin{prop}\label{prop:baza}  If $\Omega\subseteq\rn$ is bounded,   $2\leq p<\infty$, and $\omega_2\in L^1_{loc}(\Omega)$ satisfies~\eqref{Halpha} with some $\alpha\in (0,p)$. Assume further that  $(\omega_1,\omega_2)$ is a  pair of weights in Hardy inequality~\eqref{inq:hardyplap}. There exists $\{e_j\}_{j=1}^\infty$ being a basis of $W^{1,p}_{(\omega_1,\omega_2),0}(\O)$, which is orthogonal in $L^2(\Omega)$. \footnote{!!!! Czy my tu nie oczekujemy przypadkiem $L^2(\Omega)\subset\subset W^{1,p}_{(\omega_1,\omega_2),0}(\O)$???? Nie zadamy co prawda, by dowolny uklad orto w L2 byl baza} \end{prop}

\subsection{The weights}\label{ss:weights}

In this section we give the restrictions on $\omega_1$ and $\omega_2$ sufficient for the existence of solutions to the problem
\begin{eqnarray}
\label{operatorgeneral}
u_t-\mathrm{div}(\omega_2|\na u|^{p-2}\na u) &=& \lambda\omega_1|u|^{p-2}u\qquad on\quad\Omega_T.
\end{eqnarray}

%\textbf{Admissible weights.}
We call the pair of functions $(\omega_1,\omega_2)$  an admissible pair in our framework if the following conditions are satisfied
\begin{enumerate}
\item $\omega_1,\omega_2:\overline{\Omega}\to\R_+\cup\{0\}$ and  $ \omega_2$ is such that for any  $U\subset\subset \Omega$ there exists a constant $\omega_2(x)\geq c_U>0$ in $U$;
\item $\omega_1,\omega_2$ satisfy the $B_p$-condition~\eqref{Bp};
\item $\omega_2$ satisfies~$({\cal H}_\alpha)$ and if $\Omega$ is unbounded we additionally require that $\omega_2$ satisfies also condition $({\cal H}^\infty_{\alpha,\beta})$;
\item $(\omega_1,\omega_2)$ is a  pair of weights in the Hardy inequality~\eqref{inq:hardyplap}.

\end{enumerate}

\subsubsection*{Comments}

We give here the reasons for which we assume the above conditions, respectively:
\begin{enumerate}
\item is necessary for the strict monotonicity of the operator;
\item is necessary for   $W^{1,p}_{(\omega_1,\omega_2)}(\O)$ to be a Banach space;\\ it implies $\omega_1,\omega_2\in L^{1}_{loc}(\Omega)$;
\item is necessary for the compact embedding~\eqref{eq:Lremb};\\
 in particular it provides the existence of the basis of $W^{1,p}_{(\omega_1,\omega_2),0}(\O)$, which is orthogonal in $L^2(\Omega)$;
\item is necessary for a priori estimates for solutions.
\end{enumerate}

\subsubsection*{Examples} We give several examples of  weights admissible in our setting. Following~\cite{plap,bcp-plap}, we can take
\begin{itemize} 
\item on $\rn\setminus\{0\}$\begin{itemize}
\item $\o_1(x)=|x|^{\gamma-p}$, $\o_2(x)=|x|^{\gamma},$ for $\gamma< p-N$, with the optimal constant $\lambda_{N,p}=\left( ({p-N-\gamma})/p\right)^p;$\end{itemize}
\item on $\rn $\begin{itemize}
\item $\o_1(x)=\left(1+| x|^{\frac{p}{p-1}}\right)^{(p-1)(\gamma-1)}$, $\o_2(x)=\left(1+| x|^{\frac{p}{p-1}}\right)^{(p-1) \gamma },$ for~$\gamma>1$, with   $ {K=n\left(\frac{p(\gamma-1)}{p-1}\right)^{p-1}}$ optimal whenever $\gamma\geq n+ 1-\frac{n}{p};$
\end{itemize}
\item on $\Omega\subseteq\rn$

Let us consider any function $u$  that is superharmonic in $\Omega\subseteq\rn$ (i.e.~$\Delta u\leq 0$) and an arbitrary $\beta>3$. We can take  \[\o_1(x)=u^{-\beta-1}(x)|\nabla u(x)|^2\quad {\rm and}\quad \o_2(x)=u^{-\beta+1}(x),\] if only each of them satisfies $B_p$ condition~\eqref{Bp}. Then~$K={3(\beta-3)}.$

\end{itemize}

\subsubsection*{General Hardy inequality}%\label{HarSec}

The following result {\cite[Theorem 4.1]{plap}} gives sufficient conditions for the Hardy-type inequality to hold.

\begin{theo}
\label{theoplap} Let $\O$ be any open subset of $\rn$,  $1<p <\infty$, and nonnegative function $v
\in W^{1,p}_{loc}(\Omega)$ such that  $-\Delta_p v\in L^1_{loc}(\Omega)$. Suppose that the following condition is satisfied \begin{equation*}\label{s0}
 \sigma_0:=\inf \left\{\sigma\in\R: {-\Delta_p v\cdot v}+{\sigma |\nabla v|^p}\geqslant 0 \quad a.e.\ in\ \Omega\cap \{v>0\}\ \right\}\in\R.
\end{equation*} Moreover, let $\beta$ and $\sigma$ be arbitrary real numbers such that $\beta>\min\{0,\sigma\}$.

Then, for every Lipschitz function $\xi$ with compact support in $\Omega$, we have
\begin{equation}\label{inq:hardyplap}
K \int_\Omega \ |\xi|^p \omega_{1}(x) dx \leqslant \int_\Omega |\nabla \xi|^p \omega_{2}(x)dx,
\end{equation} where $K=\left(\frac{\beta-\sigma}{p-1}\right)^{p-1}$,\begin{eqnarray}
&\omega_{1}(x)&= \big(-\Delta_p v\cdot v+\sigma |\nabla v|^p\big)\cdot v^{-\beta-1}\chi_{\{v>0\}},\label{om1p}
\\\label{om2p}
&\omega_{2}(x)&=  v^{p-\beta-1}\chi_{\{|\nabla v|\neq 0\}}.
\end{eqnarray}
\end{theo}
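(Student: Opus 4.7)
The plan is to test the distributional identity for $v$ against $\vp:=v^{-\b}|\xi|^p$, expand, and then use Young's inequality to obtain the desired estimate. Formally, since $\na\vp=-\b v^{-\b-1}|\xi|^p\na v+p\,v^{-\b}|\xi|^{p-2}\xi\,\na\xi$, the identity
\begin{equation*}
\int_\O|\na v|^{p-2}\na v\cdot\na\vp\,dx=\int_\O(-\D_pv)\,\vp\,dx
\end{equation*}
combined with $v^{-\b}=v\cdot v^{-\b-1}$ rearranges to
\begin{equation*}
\int_\O v^{-\b-1}|\xi|^p\bigl[(-\D_pv)\cdot v+\b|\na v|^p\bigr]dx=p\int_\O v^{-\b}|\xi|^{p-2}\xi\,|\na v|^{p-2}\na v\cdot\na\xi\,dx.
\end{equation*}
Splitting $\b|\na v|^p=\s|\na v|^p+(\b-\s)|\na v|^p$ on the left separates out $\int\o_1|\xi|^p\,dx$ plus the nonnegative term $(\b-\s)\int v^{-\b-1}|\na v|^p|\xi|^p\chi_{\{v>0\}}\,dx$, so the identity puts precisely $\o_1$ into play.

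Next I would apply Young's inequality to the right-hand side after the splitting
\begin{equation*}
v^{-\b}|\xi|^{p-1}|\na v|^{p-1}|\na\xi|=\bigl(v^{(p-\b-1)/p}|\na\xi|\bigr)\cdot\bigl(v^{-(\b+1)(p-1)/p}|\xi|^{p-1}|\na v|^{p-1}\bigr),
\end{equation*}
which is chosen so that the first factor raised to the $p$-th power is $\o_2|\na\xi|^p$ and the second factor raised to the $p'$-th power is $v^{-\b-1}|\na v|^p|\xi|^p$. With a free parameter $\t>0$ this gives
\begin{equation*}
p\int_\O v^{-\b}|\xi|^{p-1}|\na v|^{p-1}|\na\xi|\,dx\le\t^p\int_\O\o_2|\na\xi|^p\,dx+\frac{p-1}{\t^{p'}}\int_\O v^{-\b-1}|\na v|^p|\xi|^p\chi_{\{v>0\}}\,dx.
\end{equation*}
Picking $\t^{p'}=(p-1)/(\b-\s)$---which requires $\b>\s$, the quantitative content of the hypothesis $\b>\min\{0,\s\}$ once one also enforces $\s\ge\s_0$---absorbs the garbage term on the right into the corresponding term on the left, leaving $\t^p=((p-1)/(\b-\s))^{p-1}$ as the constant in front of $\int\o_2|\na\xi|^p$, which gives $K=((\b-\s)/(p-1))^{p-1}$ upon inversion.

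The main obstacle is making the test-function calculation rigorous, since $\vp=v^{-\b}|\xi|^p$ is singular wherever $v$ vanishes. I would handle this by the standard regularization $v\mapsto v_\e:=v+\e$, working with $\vp_\e:=v_\e^{-\b}|\xi|^p$, which belongs to $W^{1,p}$ and has compact support in $\O$; the distributional identity extends from $C_c^\infty$ to this class of test functions by mollification, thanks to $v\in W^{1,p}_{loc}(\O)$ and $-\D_pv\in L^1_{loc}(\O)$. The truly delicate step is the passage $\e\to 0^+$: since $-\D_pv$ need not be signed, monotone convergence cannot be applied naively to $\int(-\D_pv)v_\e^{-\b}|\xi|^p$. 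The hypothesis $\s_0\in\R$ is precisely what unblocks the argument, because the grouping $(-\D_pv)v+\s|\na v|^p$ is nonnegative a.e.\ on $\{v>0\}$ for any $\s\ge\s_0$, so monotone convergence applies to this piece after decomposing $v_\e^{-\b}=v\cdot v_\e^{-\b-1}+\e\cdot v_\e^{-\b-1}$; the remaining $\e$-terms are controlled by dominated convergence, using $-\D_pv\in L^1_{loc}$ together with the boundedness of $|\xi|$ and its compact support.
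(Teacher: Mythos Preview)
The paper does not prove this theorem: it is quoted as \cite[Theorem~4.1]{plap} and invoked only as a black box to supply admissible weight pairs $(\omega_1,\omega_2)$ for the parabolic analysis. There is therefore no in-paper proof to compare your proposal against.

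That said, your strategy is exactly the method of the cited source: test the weak formulation of $-\Delta_p v$ against a regularized $v_\e^{-\b}|\xi|^p$, split $\b=\s+(\b-\s)$ so that the nonnegative combination $(-\Delta_p v)\cdot v+\s|\na v|^p$ appears, and absorb the cross term via Young's inequality with the optimal parameter. Your computation of $K=\left(\frac{\b-\s}{p-1}\right)^{p-1}$ is correct. The one place your sketch is thin is the claim that the remainder $\e\int_\O(-\Delta_p v)\,v_\e^{-\b-1}|\xi|^p\,dx$ is handled by dominated convergence: when $\b>0$ and $\{v=0\}$ has positive measure this fails, since $\e\,v_\e^{-\b-1}=\e^{-\b}\to\infty$ there while $-\Delta_p v\in L^1_{loc}$ need not vanish on that set. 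The clean fix is to apply Young already at the $\e$-level and then pass to the limit in the resulting one-sided inequality, using Fatou on the nonnegative $\omega_1$-side (the bracket $(-\Delta_p v)\,v+\s|\na v|^p$ vanishes a.e.\ on $\{v=0\}$, so only $\{v>0\}$ contributes) and monotone or dominated convergence on the $\omega_2$-side (which is supported on $\{|\na v|\neq 0\}\subset\{v>0\}$ a.e.), rather than trying to recover the identity itself in the limit.
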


\begin{rem}\rm  Note that under the assumptions of Theorem~\ref{theoplap}  we have $\omega_1,\omega_2\in L^1_{loc}(\Omega)$.
 \end{rem}

\begin{rem}\rm \label{rem:const}  By $\lambda_{N,p}$ we denote the greatest possible constant $K$ such that~\eqref{inq:hardyplap} holds (for fixed weights $\omega_1,\omega_2$).

Let us mention several examples of application of~\cite[Theorem 4.1]{plap} leading to the inequalities with the best constants. Namely, they are achieved in
%\begin{itemize}  in
 the classical Hardy inequality (Section~5.1 in~\cite{plap});
 the Hardy-Poincar\'{e} inequality obtained  in~\cite{bcp-plap} due to~\cite{plap}, confirming some constants from~\cite{gm} and~\cite{bbdg} and establishing the optimal constants in further cases;
 the Poincar\'{e} inequality concluded from~\cite{plap}, confirmed to hold with best constant in Remark~7.6 in \cite{akiraj}.
%\end{itemize}
Moreover, the  inequality in Theorem~5.5 in~\cite{plap} can also be retrieved by the methods of~\cite{akkppstudia} with the same constant, while some inequalities from Proposition~5.2  in~\cite{akkppcentue} are comparable with Theorem~5.8 in~\cite{plap}. Generalisation of~\cite[Theorem 4.1]{plap} in~\cite{pdakis1} leads to the optimal result in~\cite{akisP}.\end{rem}

\section{Existence}

Let us start with the definition of a weak solution to the parabolic problem.

We consider \[W:\Omega\to\R_{+}\]
such that
\[W(x)\leq \min\{m,\omega_1(x)\}\]
 with a certain $m\in\R_+$.

\begin{defi}
We call a function $u$ a weak solution to
\begin{equation}\label{eq:main}\left\{\begin{array}{ll}
 u_t-\dL u=  \lambda W(x)|u|^{p-2}u & x\in\Omega,\\
 u(x,0)=f(x)& x\in\Omega,\\
 u(x,t)=0& x\in\partial\Omega,\ t>0,\\
\end{array}\right.
\end{equation} if
\begin{align*}
u &\in L^p(0,T; W_{(\omega_1,\omega_2),0}^{1,p}(\Omega)),\\
u_t &\in L^{p'}(0,T; {W^{-1,p'}_{(\omega_1,\omega_2)}(\Omega)}),
\end{align*}
and
\begin{equation*}
\label{eq:mainweak}
\int_{\Omega_T}\left(  u_t\xi+\omega_2|\nabla u|^{p-2} \nabla u \nabla \xi +\lambda W(x) |u|^{p-2}u \xi\right)dx\,dt=0,
\end{equation*}
holds for every $\xi\in L^p(0,T; W_{(\omega_1,\omega_2),0}^{1,p}(\Omega))$.
\end{defi}

Existence of a solution to the  problem is obtained by the Galerkin approximation, where we use the fact that the operator $-\dL$ from~\eqref{Lpom2} is monotone (note that it is implied by $\omega_2>0$ in $\Omega$).

\begin{theo}
 \label{lemextrun} Suppose $2\leq p<N$, $\Omega\subseteq\rn$ is an open subset, $f\in L^2(\Omega)$, $m\in\R_+$. Assume that $\omega_1,\omega_2:\Omega\to\R_+$ satysfying~\eqref{Bp} are given by~\eqref{om1p},\eqref{om2p}, respectively. Moreover, assume $\omega_2$ satisfies~$({\cal H}_\alpha)$ and if $\Omega$ is unbounded additionally require that $\omega_2$ satisfies also condition $({\cal H}^\infty_{\alpha,\beta})$.

 There exist  $\lambda_0=\lambda_0(p,N,\omega_1,\omega_2)$ and a weak solution $u$ to~\eqref{eq:main}, such that for all $\lambda\in(0,\lambda_0)$ the solution $u$ is in $L^\infty(0,T; L^2(\Omega_T))$.
\end{theo}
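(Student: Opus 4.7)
The plan is to prove existence via Galerkin approximation, in the spirit of Lions and of~\cite{anh}, adapted to the two-weight setting. The Hardy inequality~\eqref{inq:hardyplap} will be used in two crucial places: to control the right-hand side by the diffusion term (and thus to identify $\lambda_0$), and to bound the zero-order dual norm of $u_t$.

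First I would fix an $L^2(\Omega)$-orthogonal basis $\{w_j\}_{j\in\mathbb{N}}$ of $W^{1,p}_{(\omega_1,\omega_2),0}(\Omega)$, whose existence is guaranteed by the compact embedding $W^{1,p}_{(\omega_1,\omega_2),0}(\Omega)\subset\subset L^2(\Omega)$ from Proposition~\ref{prop:emb}. Seek approximate solutions $u_n(t,x)=\sum_{j=1}^{n}g_j^n(t)w_j(x)$ satisfying, for $j=1,\ldots,n$,
\begin{equation*}
\langle \partial_t u_n,w_j\rangle+\int_\Omega \omega_2|\nabla u_n|^{p-2}\nabla u_n\cdot\nabla w_j\,dx=\lambda\int_\Omega W(x)|u_n|^{p-2}u_n\, w_j\, dx,
\end{equation*}
with $u_n(0)=P_n f$, the $L^2$-projection of $f$ onto $\mathrm{span}\{w_1,\ldots,w_n\}$. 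Since $W\le m$, the right-hand side is locally Lipschitz in the coefficients $g_j^n$, and the monotonicity/hemicontinuity of $-\dL$ (Fact~\ref{facthemi}) together with $\omega_2\ge c_U>0$ on compact sets gives Carathéodory-type regularity, so a local-in-time ODE solution exists and will be extended by the a priori bounds.

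Next I would derive the a priori estimates by testing with $u_n$. This yields
\begin{equation*}
\frac{1}{2}\frac{d}{dt}\|u_n\|_{L^2(\Omega)}^2+\int_\Omega \omega_2|\nabla u_n|^p\,dx=\lambda\int_\Omega W|u_n|^p\,dx\le \lambda\int_\Omega \omega_1|u_n|^p\,dx.
\end{equation*}
Applying the Hardy inequality~\eqref{inq:hardyplap} (with optimal constant $\lambda_{N,p}$, see Remark~\ref{rem:const}) to the right-hand side, one obtains, setting $\lambda_0:=\lambda_{N,p}$, for $\lambda\in(0,\lambda_0)$,
\begin{equation*}
\frac{1}{2}\frac{d}{dt}\|u_n\|_{L^2(\Omega)}^2+\Big(1-\tfrac{\lambda}{\lambda_0}\Big)\int_\Omega \omega_2|\nabla u_n|^p\,dx\le 0.
\end{equation*}
Integrating in time and using $\|P_n f\|_{L^2}\le\|f\|_{L^2}$ yields uniform bounds of $\{u_n\}$ in $L^\infty(0,T;L^2(\Omega))$ and in $L^p(0,T;W^{1,p}_{(\omega_1,\omega_2),0}(\Omega))$. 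From the equation, Hölder's inequality, $W\le m\wedge\omega_1$, and the Hardy inequality once more, I would then bound $\{\partial_t u_n\}$ in $L^{p'}(0,T;W^{-1,p'}_{(\omega_1,\omega_2)}(\Omega))$. These estimates both furnish a global-in-time solution $u_n$ and permit extraction of a subsequence with $u_n\rightharpoonup u$ weakly in $L^p(0,T;W^{1,p}_{(\omega_1,\omega_2),0}(\Omega))$, $\partial_t u_n\rightharpoonup \partial_t u$ weakly in $L^{p'}(0,T;W^{-1,p'}_{(\omega_1,\omega_2)}(\Omega))$, and—via Aubin--Lions applied with the compact embedding~\eqref{eq:L2emb}—$u_n\to u$ strongly in $L^2(\Omega_T)$ and a.e.

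The main obstacle is then passing to the limit in the two nonlinear terms. For the right-hand side, strong $L^2$ and a.e.\ convergence of $u_n$ combined with the bound $W\le m$ and Vitali's theorem identify the weak limit of $W|u_n|^{p-2}u_n$ as $W|u|^{p-2}u$. For the principal part I would employ the Minty--Browder monotonicity trick: denote by $\chi$ the weak limit of $\omega_2|\nabla u_n|^{p-2}\nabla u_n$ in the dual space, derive an energy inequality by taking $\liminf$ in the test against $u_n$ itself (using that the $L^2$-initial data converge), and exploit the monotonicity inequality
\begin{equation*}
\int_0^T\!\!\int_\Omega \omega_2\bigl(|\nabla u_n|^{p-2}\nabla u_n-|\nabla v|^{p-2}\nabla v\bigr)\cdot(\nabla u_n-\nabla v)\,dx\,dt\ge 0
\end{equation*}
for arbitrary $v\in L^p(0,T;W^{1,p}_{(\omega_1,\omega_2),0}(\Omega))$. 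Passing $n\to\infty$ and then applying hemicontinuity (Fact~\ref{facthemi}) through the standard choice $v=u\pm\varepsilon w$ lets one conclude $\chi=\omega_2|\nabla u|^{p-2}\nabla u$. The $L^\infty(0,T;L^2(\Omega))$-regularity of $u$ is then inherited from the uniform bound on $u_n$ via lower semicontinuity of the norm under weak-$\ast$ convergence, while $u(0)=f$ follows from the continuity of $u$ with values in $L^2(\Omega)$ provided by the regularity $u\in L^p(0,T;W^{1,p}_{(\omega_1,\omega_2),0})$ with $\partial_t u\in L^{p'}(0,T;W^{-1,p'}_{(\omega_1,\omega_2)})$ and the convergence $u_n(0)=P_n f\to f$ in $L^2(\Omega)$.
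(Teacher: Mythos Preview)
Your proposal is correct and follows essentially the same route as the paper: Galerkin approximation with an $L^2$-orthogonal basis of $W^{1,p}_{(\omega_1,\omega_2),0}(\Omega)$, the Hardy inequality~\eqref{inq:hardyplap} to obtain the a~priori estimate and identify $\lambda_0$ (the paper writes $1-\lambda/K$, i.e.\ $\lambda_0=K$), extraction of weakly convergent subsequences, and the Minty--Browder monotonicity/hemicontinuity argument to identify $\chi=-\dL u$. The only noteworthy difference is in handling the lower-order term: you invoke Aubin--Lions and Vitali to pass to the limit in $W|u_n|^{p-2}u_n$, whereas the paper argues more directly from the compact embedding~\eqref{eq:Lremb} and the bound $W\le\omega_1$; both are adequate here since $W\le m$.
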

\begin{proof}
We apply the Galerkin method. Remind that according to~\eqref{eq:L2emb} we have \[W^{1,p}_{(\omega_1,\omega_2),0}(\O)\subset  L^2(\Omega),\] which is a and the former is a closed subspace of the latter. Thus, each basis of~$W^{1,p}_{(\omega_1,\omega_2),0}(\O)$ is contained in~the $ L^2(\Omega)$ and  can be orthogonalised with respect to $L^2(\Omega)$ scalar product. Suppose $(e_j)_{j=1}^\infty$ is a basis of~$W_{(\omega_1,\omega_2),0}^{1,p}(\Omega )\cap  L^2(\Omega)$, which is orthogonal in~$L^2(\Omega)$.  We construct an~approximating sequence $(\tu^{n})_{n=1}^{\infty}$ given by
\begin{equation}
\label{eq:tuapprox}
\tu^n(t)=\sum_{k=1}^{n} a_n^k(t)e_k,\qquad t\in[0,T].
\end{equation}
 We determine $a_n^k$ by solving
the differential equation, which is a projection of the original problem to the finite-dimensional subspace $\mathrm{span}(e_1,\dots,e_n)$,\begin{multline}
(a^k_n)'(t)=\label{eq:a'}\\
=  \sum_{j=1}^n\left[\int_\Omega\omega_2 \left|\sum_{l=1}^{n} a^l_n (t)\nabla e_l\right|^{p-2}a^j_n \nabla e_j\nabla e_k dx +\lambda\int_\Omega\tW \left|\sum_{l=1}^{n} a^l_n (t)  e_l\right|^{p-2}a^j_n   e_j  e_k dx \right]
\end{multline}
 with the initial conditions
\[\langle a^k_n(0),e_k\rangle=\langle f,e_k\rangle,\qquad k=1,\dots,n,\]
\[\tu^n (0)=f_n=\sum_{k=1}^n\langle  f, e_k\rangle e_k\in\mathrm{span}(e_1,\dots,e_n).\]

Using the Peano theorem, we get the local existence of the coefficients $a^k_n(t)$ on some intervals $[0,t_n]$. Note that $a^k_n(t)$ depend on $||W||_{L^\infty(\Omega)}\leq m$.

 Let us now establish an a~priori estimate for $\tu^n$ and $(\tu^n)_t$. Because of~\eqref{eq:a'}, we have
 \[\frac{1}{2}\frac{d}{dt}\|\tu^n(t)\|_{L^2(\Omega)}^2+\int_\Omega \omega_2 |\nabla \tu^n|^pdx =\lambda \int_\Omega \tW  |\tu^n|^pdx\]
and moreover
\begin{equation*}
%\label{eq:1stterm}
\int_0^T \frac{1}{2}\frac{d}{dt}\|\tu^n(t)\|_{L^2(\Omega)}^2dt=  \frac{1}{2} \|\tu^n(T)\|_{L^2(\Omega)}^2- \frac{1}{2}\|\tu^n(0)\|_{L^2(\Omega)}^2.
\end{equation*}

Therefore,
\[\frac{1}{2}\|\tu^n (T)\|_{L^2(\Omega)}^2+ \int_0^T\|\nabla \tu^n(t)\|^p_{L^p_{\omega_2}(\Omega)}dt  \leq\lambda \int_0^T\|  \tu^n(t)\|^p_{L^p_{\omega_1}(\Omega)}dt+\frac{1}{2}\|\tu^n (0)\|_{L^2(\Omega)}^2. \]
We estimate the right-hand side using the Hardy-type inequality~\eqref{inq:hardyplap} and we obtain
\[\frac{1}{2}\|\tu^n (T)\|_{L^2(\Omega)}^2+ \int_0^T\|\nabla \tu^n(t)\|^p_{L^p_{\omega_2}(\Omega)}dt  \leq \frac{\lambda}{K} \int_0^T\|  \nabla\tu^n(t)\|^p_{L^p_{\omega_2}(\Omega)}dt+\frac{1}{2}\|\tu^n (0)\|_{L^2(\Omega)}^2, \]
which we rearrange to \begin{equation}
\label{eq:apriori}\frac{1}{2}\|\tu^n (T)\|_{L^2(\Omega)}^2+ \left(1-\frac{\lambda}{K}\right)\int_0^T\|\nabla \tu^n(t)\|^p_{L^p_{\omega_2}(\Omega)}dt \leq  \frac{1}{2}\|\tu^n (0)\|_{L^2(\Omega)}^2.
\end{equation}
Thus we can assume $t_n=T$ for each $n$ and as $\tu^n(0)\to f\in L^2(\Omega)$. We conclude that the sequence $(\tu^n)_{n=1}^\infty$ is bounded in \[
 L^\infty(0,T;  L^2(\Omega ))\cap L^p(0,T; {W^{1,p}_{(\omega_1,\omega_2),0}(\Omega)}).\]

Thus, we can choose its subsequence (denoted $(\tu^n)_{n=1}^\infty$ as well) such that
\begin{eqnarray}
\tu^n\xrightharpoonup[n\to\infty]{\ \ *\ \ }\tu& \mathrm{in}&   L^\infty(0,T;  L^2(\Omega )),\nonumber\\
\tu^n\xrightharpoonup[n\to\infty]{\ \ \ \ \ }\tu& \mathrm{in}&   L^p( 0,T; {W^{1,p}_{(\omega_1,\omega_2),0}(\Omega)}),\label{eq:weakconv}\\
\tu^n(T)\xrightharpoonup[n\to\infty]{\ \ \ \ \ }\zeta& \mathrm{in}&   L^2( \Omega).\nonumber
\end{eqnarray}

The fact that $\zeta=u(T)$ is a direct consequence of the arguments of~\cite[Chap.~2, Par.~1.2.2]{lions}.

Let us show that the limit function $\tu$ satisfies~\eqref{eq:main}. As $\tu^n$ solves the finite-dimensional projection  of the problem~\eqref{eq:main},   for each test function  \mbox{$w\in L^p(0,T;{W^{1,p}_{(\omega_1,\omega_2),0}(\Omega)})$} we have
\[\left|\int_0^T\int_\Omega  -\Delta_p^{\omega_2}\tu^n w\,   dxdt\right|=
\left|\int_0^T\int_\Omega   |\nabla\tu^n |^{p-2} \nabla\tu^n \nabla w\, \omega_2\, dxdt\right|=\]
\[=\left|\int_0^T\int_\Omega \left(\omega_2^{\frac{p-1}{p}}|\nabla\tu^n |^{p-2} \nabla\tu^n\right)\left(\omega_2^{\frac{ 1}{p}}\nabla w\right)dxdt\right|\leq\]
\[\leq\left|\int_0^T\left(\int_\Omega  \omega_2|\nabla\tu^n |^{p } dx\right)^{\frac{p-1}{p}}\left(\int_\Omega \omega_2 |\nabla w|^{p } dx\right)^{\frac{1}{p}}dt\right|
\leq
\]
\[
\leq\left(\int_0^T\|\nabla\tu^n \|_{L^p(\Omega,\omega_2)}^{p } dt\right)^{\frac{p-1}{p}}\left( \int_0^T\|\nabla w\|_{L^p_{\omega_2}(\Omega)}^{p } dt\right)^{\frac{1}{p}} =\]\[=\| \tu^n \|_{L^p(0,T;W^{1,p}_{(\omega_1,\omega_2),0}(\Omega))}^{\frac{p-1}{p} }\| w \|_{L^p(0,T;W^{1,p}_{(\omega_1,\omega_2),0}(\Omega))}.\]

Using boundedness of $(\tu^n)_{n=1}^\infty$ in $L^p( 0,T; {W^{1,p}_{(\omega_1,\omega_2),0}(\Omega)})$ we infer that  $(-\Delta_p^{\omega_2}\tu^n)_{n=1}^\infty$
is bounded in $L^{p'}(0,T; {W^{-1,p'}_{(\omega_1,\omega_2)}(\Omega)})$. Moreover, we observe that there exists $\chi\in L^{p'}(0,T; {W^{-1,p'}_{(\omega_1,\omega_2)}(\Omega)})$, such that (up to a subsequence) we have
\begin{eqnarray*}-\Delta_p^{\omega_2}\tu^n\xrightharpoonup[n\to\infty]{\ \ \ \ \ }\chi& \mathrm{in}&  L^{p'}(0,T; {W^{-1,p'}_{(\omega_1,\omega_2)}(\Omega)}).
\end{eqnarray*}

As for $(\tu^n)_t(t)$, for each $w\in L^p(0,T;{W^{1,p}_{(\omega_1,\omega_2),0}(\Omega)})$ we have
\[\int_0^T \int_\Omega \ (\tu^n)_t(t)w \, dx dt = \int_0^T \int_\Omega \left(-\dL \tu^n(t)+ \lambda \tW |\tu^n(t)|^{p-2}\tu^n(t)\right) w \, dx dt .\]
Therefore \begin{eqnarray*} (\tu^n)_t(t)\xrightharpoonup[n\to\infty]{\ \ \ \ \ }\tu_t(t)& \mathrm{in}&  L^{p'}(0,T; {W^{-1,p'}_{(\omega_1,\omega_2)}(\Omega)}).\\
%\tu(x,0)&=&f(x).
\end{eqnarray*}

Our aim is now to show that $\chi=-\dL\tu$, which finishes the proof. We observe that $-\Delta_p^{\omega_2}$ is a  monotone operator,  therefore for each \mbox{$w\in L^p(0,T;{W^{1,p}_{(\omega_1,\omega_2),0}(\Omega)})$} there holds
\[A^n:=\int^T_0\ll-\dL \tu^n(t)+\dL w(t),\tu^n(t)-w(t)\rr dt\geq 0.\]
Since
\begin{multline*}\int^T_0\ll-\dL \tu^n(t) ,\tu^n(t) \rr dt=\\=\lambda \int_0^T \langle \tW|\tu^n|^{p-2}\tu^n,\tu^n\rangle dt+\frac{1}{2}\|\tu^n(0)\|^2_{L^2(\Omega)}-\frac{1}{2}\|\tu^n(T)\|^2_{L^2(\Omega)},\end{multline*}
we have
\begin{align*}
  A^n &=\lambda \int_0^T \langle  \tW|\tu^n|^{p-2}\tu^n,\tu^n\rangle dt+\frac{1}{2}\|\tu^n(0)\|^2_{L^2(\Omega)}-\frac{1}{2}\|\tu^n(T)\|^2_{L^2(\Omega)}+\\
  &\quad - \int^T_0\ll-\dL \tu^n(t) ,w(t) \rr dt-\int^T_0\ll-\dL w(t) ,\tu^n(t)-w(t) \rr dt \\
  & =I_1+I_2+I_3+I_4+I_5.
\end{align*}
When $n\to\infty$, taking into account~\eqref{eq:weakconv}, we observe that
\begin{itemize}
\item  $I_1$ converges (up to a subsequence, since $W\leq \omega_1$,~\eqref{eq:Lremb}, and~\eqref{eq:apriori});
\item  $I_2,I_4,I_5$ converge;
\item  in the case of $I_3$, due to weak convergence of $u^n(T)$, we have $\liminf \|\tu^n(T)\|^2_{L^2(\Omega)}\geq |\tu^n(T)|^2$.
\end{itemize}

We take upper limit in the above equation to get
\begin{multline}
\label{eq:Anest} 0\leq  \limsup_{n\to\infty}A^n\leq \lambda \int_0^T   \tW\|\tu \|^{p}_{L^p(\Omega)} dt+\frac{1}{2}\|\tu (0)\|^2_{L^2(\Omega)}-\frac{1}{2}\|\tu(T)\|^2_{L^2(\Omega)}+\\
 - \int^T_0\ll\chi ,v  \rr dt-\int^T_0\ll-\dL v(t) ,\tu (t)-v(t) \rr dt.\end{multline}
Note that $\tu_t +\chi=\lambda\tW|\tu|^{p-2}\tu$, so
\[\lambda \int_0^T   \tW\|\tu \|^{p}_{L^p(\Omega)} dt+\frac{1}{2}\|\tu (0)\|^2_{L^2(\Omega)}-\frac{1}{2}\|\tu(T)\|^2_{L^2(\Omega)}=\int_0^T \langle \chi,\tu\rangle dt.\]
This, together with~\eqref{eq:Anest}, implies
\[0\leq \int_0^T \langle \chi,\tu\rangle dt-\int_0^T \langle \chi,v\rangle dt-\int_0^T \ll-\dL v,\tu-v\rr dt= \int_0^T \ll\chi-(-\dL v),\tu-v\rr dt.\]

As we stated in Fact~\ref{facthemi},   hemicontinuity of $\dL$ implies
\[\int_0^T\ll \chi-(-\dL(\tu-\kappa w),w\rr dt\geq 0,\] where $v=\tu-\kappa w$,  $w\in  L^p( 0,T; W_{0}^{1,p}(\Omega,\omega_2))$, and $\kappa>0$ is arbitrary. Letting now $\kappa\to 0$ we get
\[ \int_0^T\ll \chi-(-\dL \tu),w\rr dt\geq 0,\]
independently of the sign of $w$. Thus,
\[\chi=-\dL \tu.\]\end{proof}

%\noindent {\bf ACKNOWLEDGEMENTS.}

\end{document}